\newtheorem{thm}{Theorem}
\newtheorem{lem}[thm]{Lemma}
\newtheorem{cor}[thm]{Corollary}
\newtheorem{qu}[thm]{Question}
\title{The homology of surface diffeomorphism groups and a question of Morita}
\author{Jonathan Bowden}
\address{Mathematisches Institut, Ludwig-Maximilians-Universit\"at, Theresienstr. 39, 80333 M\"unchen, Germany}
\email{jonathan.bowden@mathematik.uni-muenchen.de}
\subjclass[2000]{Primary 57R30, 57R50; Secondary 57R32}
\date{\today}
\begin{document}
\maketitle
\begin{abstract}
We answer a question posed by Morita concerning the non-triviality of certain secondary characteristic classes for surface bundles. In doing so we are naturally led to show that a form of Harer stability holds for surface diffeomorphism groups in homology of small degree.
\end{abstract}
\section{Introduction}
In this article we are concerned with the problem of computing the homology of surface diffeomorphism groups considered as discrete groups. The starting point for our discussion of the homology of surface diffeomorphism groups is the following problem posed by Morita:
\begin{qu}[\cite{Mor1}, Problem 12.1]\label{Morita}
Is the map 
\[H_3(Diff^{\delta}_+(\Sigma_g)) \to \mathbb{R}^2\]
induced by $u_1c_2$ and $u_1c_1^2$ a surjection for every $g$?
\end{qu}
\noindent An affirmative answer to this question yields a natural generalisation of a result of Rasmussen, who showed that the above map is surjective for $S^2$ (see \cite{Ras}). We will in fact show that the general case is a consequence of Rasmussen's result as an application the following theorem. For this we let $Diff_c(D^2)$ be the group of compactly supported diffeomorphisms whose support lies in the interior of the closed disc $D^2$.
\begin{thm}\label{hom_surj}
Let $D^2 \subset S^2$ be an inclusion of any closed disc. Then the induced map
\[H_k(Diff^{\delta}_c(D^2), \mathbb{Q}) \to H_k(Diff^{\delta}_+(S^2), \mathbb{Q})\]
is an isomorphism for $0 \leq k \leq 2$ and a surjection for $k=3$.
\end{thm}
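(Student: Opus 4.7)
The plan is to compare the given map of discrete classifying spaces with its topological (non-discretised) analogue, which is very simple, and then bridge the gap using a Mather--Thurston style comparison together with a direct Harer-type stability input for discrete diffeomorphism groups.

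First, recall that Smale's theorems give $Diff_c(D^2)\simeq *$ and $Diff_+(S^2)\simeq SO(3)$, so that $BDiff_c(D^2)\simeq *$ and $BDiff_+(S^2)\simeq BSO(3)$. Since $H^*(BSO(3);\mathbb{Q})\cong \mathbb{Q}[p_1]$ with $|p_1|=4$, the continuous version of the map in question, $BDiff_c(D^2)\to BDiff_+(S^2)$, is a rational homology isomorphism in degrees $\le 3$. To transport this to the discrete setting I would consider the commutative square
\[
\begin{array}{ccc}
BDiff^{\delta}_c(D^2) & \longrightarrow & BDiff^{\delta}_+(S^2) \\
\big\downarrow & & \big\downarrow \\
BDiff_c(D^2) & \longrightarrow & BDiff_+(S^2),
\end{array}
\]
whose bottom row is a rational equivalence through degree 3 by the above. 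Thus the theorem would follow from sufficient control on the rational homology of the homotopy fibres of the vertical ``forgetful'' maps.

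By the Mather--Thurston theorem in codimension two, each of these fibres is a section space of a bundle whose fibre is $\Omega^2_0 B\overline{\Gamma}_2$, where $B\overline{\Gamma}_2$ is the Haefliger classifying space for codimension-2 foliations with trivialised normal bundle. Mather's $2$-connectedness of $B\overline{\Gamma}_2$ gives the vanishing of low-degree homotopy of this loop space, and for the range $k\le 2$ this goes in the right direction. However, the known connectivity of $B\overline{\Gamma}_2$ is not by itself strong enough to pin down degree $k=3$ rationally, which is why I would supplement the Mather--Thurston comparison with a direct Harer-type argument: construct a semi-simplicial complex of tuples of disjointly embedded arcs (or discs) in $S^2$ adapted to the chosen disc $D^2$, let $Diff^{\delta}_+(S^2)$ act on it, identify the stabiliser of a top simplex with $Diff^{\delta}_c(D^2)$ up to factors whose rational homology is understood, and use classical high connectivity of such arc/disc complexes on surfaces to run the equivariant homology (Cartan--Leray) spectral sequence. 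Combined with the vanishing of $H^{\le 3}(BSO(3);\mathbb{Q})$ beyond degree $0$, this yields the rational $H_*$-iso in degrees $\le 2$ and the surjection in degree $3$.

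The main obstacle is the delicate interplay between the continuous moduli of embedded arcs and the action of a \emph{discrete} group: the ``orbits'' of $Diff^{\delta}_+(S^2)$ on the space of arcs are not discrete, so one cannot directly imitate the combinatorial Harer setup. The key technical step will therefore be to replace the space of embedded arcs by a suitable combinatorial or bi-simplicial resolution on which the action is transparent, while controlling the continuous direction via the topological analogue treated above. This packaging is, in essence, the ``form of Harer stability for surface diffeomorphism groups in homology of small degree'' that the abstract promises, and it is the technical heart of the proof.
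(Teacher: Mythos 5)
Your overall framework is sound and matches the paper's outer layer: Smale's theorems, the vanishing of $H_k(BSO(3),\mathbb{Q})$ for $1\leq k\leq 3$, and the fibration $B\overline{G}\to BG^{\delta}\to BG$ reduce everything to comparing $H_k(B\overline{Diff_c(D^2)})$ with $H_k(B\overline{Diff_+(S^2)})$. But precisely this comparison --- iso for $k\leq 2$, surjection for $k=3$ --- is the content of the theorem, and your proposal does not prove it. The Harer-type argument you sketch (a semi-simplicial complex of arcs or discs in $S^2$ with a $Diff^{\delta}_+(S^2)$-action, stabilisers identified with $Diff^{\delta}_c(D^2)$, Cartan--Leray) is left entirely unconstructed, and you yourself flag the fatal issue: the action of the discrete diffeomorphism group on a space of embedded arcs has non-discrete orbits, so the standard equivariant-homology setup does not apply, and the ``suitable combinatorial or bi-simplicial resolution'' you defer to is exactly the missing technical heart. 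As written, the proposal establishes nothing beyond degree $1$ (where Thurston's perfectness already gives $H_1(B\overline{Diff_c(\mathbb{R}^2)})=0$). The paper closes this gap with a concrete and different mechanism: Haller's fragmentation lemma together with the \v{C}ech-type spectral sequence $E^2_{p,q}=H_p(M,A^2_q)$, where $A^2_q=H_q(B\overline{Diff_c(\mathbb{R}^2)})$. For $M=S^2$ the degree-$3$ column collapses to $E^2_{0,3}\oplus E^2_{1,2}=A^2_3$ (using $A^2_1=0$ and $H_1(S^2)=0$), and naturality of the spectral sequence for $D^2\subset S^2$ gives the iso in degrees $\leq 2$ and the surjection in degree $3$. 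Fragmentation is the rigorous stand-in for the resolution you were hoping to build.

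A further point: your claim that ``the known connectivity of $B\overline{\Gamma}_2$ is not by itself strong enough to pin down degree $k=3$'' is unnecessarily pessimistic, because in degree $3$ only a surjection is required. If you pursue the Mather--Thurston route, the correct statement is that $B\overline{Diff_c(D^2)}$ has the homology of $\Omega^2_0\overline{B\Gamma}_2$, while $B\overline{Diff_+(S^2)}$ has the homology of a space of sections over $S^2$ of a bundle with fibre (a twisted form of) $\overline{B\Gamma}_2$, not with fibre $\Omega^2_0\overline{B\Gamma}_2$ as you wrote. Restricting sections to a small disc away from $D^2$ gives a fibration whose fibre is the compactly supported section space $\simeq\Omega^2\overline{B\Gamma}_2$ and whose base is homotopy equivalent to the $3$-connected space $\overline{B\Gamma}_2$; the Serre spectral sequence (or the long exact homotopy sequence, since everything in range is simply connected) then yields an isomorphism on $H_k$ for $k\leq 2$ and a surjection on $H_3$, exactly as needed. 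So either the paper's fragmentation argument or a completed Mather--Thurston argument fills the hole; the arc-complex detour is neither needed nor, in the form you describe, workable.
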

For then the map 
\[H_3(Diff^{\delta}_c(D^2)) \to \mathbb{R}^2\]
induced by the classes $u_1c_2$ and $u_1c_1^2$ is surjective and factors through $H_3(Diff^{\delta}_+(\Sigma_g))$ for any $g$, which immediately yields a positive answer to Morita's question.

The proof of Theorem \ref{hom_surj} is based on certain calculations used to prove the perfectness of the identity component of the group of compactly supported diffeomorphism. These techniques have become a sort of folklore and as such we must fall back upon the PhD thesis of Stefan Haller (\cite{Hal}) for several technical facts that form the basis of our arguments (see also \cite{Mat}, \cite{Tsu}).


Morita has also posed the problem of whether Harer stability holds for surface diffeomorphism groups (\cite{Mor1}, Problem 12.2). As a further application of the methods we discuss, we will show that a version of Harer stability does in fact hold in low degrees. In order to state this we let $\Sigma^1_g$ denote a compact genus $g$ surface with one boundary component and $\Gamma_g^1$ the mapping class group of diffeomorphisms with support in the interior of $\Sigma^1_g$ modulo isotopy. We further consider the natural inclusion
\[\Sigma^1_g \hookrightarrow \Sigma^1_{g+1}.\]
Harer stability then says that the inclusion above induces an isomorphism
\[H_k(\Gamma_g^1, \mathbb{Q}) \to H_k(\Gamma_{g+1}^1, \mathbb{Q}) \text{, if } g \geq \frac{3k}{2} + 2.\]
We will show that a similar property holds for the groups $Diff_c(\Sigma_g^1)$ of diffeomorphisms with support in the interior of $\Sigma^1_g$ up to homology of degree $3$ (cf.\ Theorem \ref{Harer_stability}). We further prove that the image of $H_4(Diff^{\delta}_c(\Sigma_g^1), \mathbb{Q})$ in $H_4(\Gamma_g^1, \mathbb{Q})$ is independent of the genus if $g \geq 8$. This result then has implications for the problem of determining the non-triviality of the image of the second Mumford-Miller-Morita (MMM) class $e_2$ in $H^4(Diff^{\delta}_c(\Sigma_g^1), \mathbb{Q})$, which was posed in \cite{KM1}. In particular, it implies that the non-vanishing of the image of $e_2$ in $H^4(Diff^{\delta}_c(\Sigma_g^1), \mathbb{Q})$ is independent of $g$ as soon as $g$ is at least $8$.


\section{Results on $B \overline{G}$}\label{BG}
In this section we shall recall some general results concerning the homology of the space $B \overline{G}$. For a general topological group $G$ we let $G^{\delta}$ be the same group considered with the discrete topology. The homotopy fibre of the map
\[G^{\delta} \to G\]
is also a topological group in a natural way and is denoted $\overline{G}$. By considering classifying spaces we obtain in this way a fibration 
\[B \overline{G} \to B G^{\delta} \to B G.\]
For our purposes $G = Diff_{c,0}(M)$ will always be the identity component of the compactly supported diffeomorphism group of a smooth manifold $M$ considered as a topological group with the $C^{\infty}$-topology.

The homology of $B \overline{G}$ is computed as the homology of the complex generated by smooth singular simplices $\sigma: \Delta_n \to G$ that send a base vertex $e_0$ to the identity (\cite{Hal}, Lemma 1.4.3). Recall that a simplex is smooth if the map 
\[(p,x) \mapsto \sigma(p)(x)\]
defines a smooth map $\Delta_n \times M \to M$. The associated complex will be simply denoted by $C_*(B \overline{G})$. If $\mathcal{U} = \{U_i\}$ is an open covering of $M$, we let $H^{\mathcal{U}}_*(B \overline{G})$ be the homology of the subcomplex $C^{\mathcal{U}}_*(B \overline{G})$ consisting of simplices each of whose supports lie in some $U_i$. If the covering consists of a single set $U$ we will write $H_*(B \overline{G_U})$ following \cite{Hal}. This notation reflects the fact that $H^{U}_*(B \overline{G})$ is just the homology of $B \overline{G_U}$, where $G_U = Diff_{c,0}(U)$. The most important result we shall need is the so-called fragmentation lemma.
\begin{lem}[\cite{Hal}, Th.\ 2.2.10]
Let $\mathcal{U}$ be any covering of a manifold $M$ and let $\mathcal{U}^{(m)}$ denote the cover consisting of all $m$-fold unions of sets in $\mathcal{U}$. Then the natural map
\[H^{\mathcal{U}^{(m)}}_k(B \overline{G}) \to H_k(B \overline{G})\]
is an isomorphism for all $0 \leq k \leq m$.
\end{lem}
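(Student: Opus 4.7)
The plan is to construct an explicit fragmentation operator $F$ on smooth chains and show it is chain-homotopic to the inclusion $C_*^{\mathcal{U}^{(m)}}\hookrightarrow C_*$, with both $F$ and the homotopy landing inside $C_*^{\mathcal{U}^{(m)}}$ throughout the relevant range of degrees. Because every smooth simplex has compact support, I first restrict to a finite subcover $\{U_1,\dots,U_N\}$ of a neighborhood of the support of the chain under consideration and fix a subordinate smooth partition of unity $\{\rho_i\}$. Given a smooth simplex $\sigma:\Delta^n\to G$ with $\sigma(e_0)=\mathrm{id}$, I choose a smooth $p$-parametrized isotopy $\psi_t(p)$ from $\mathrm{id}$ to $\sigma(p)$ (for instance by coning at $e_0$), extract its time-dependent vector field $X_t(p)$, split it as $X_t(p)=\sum_i\rho_i X_t(p)$, and integrate each summand. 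This yields smooth families $\phi_i:\Delta^n\to G_{U_i}$ whose ordered pointwise product reconstructs $\sigma$ in every parameter, i.e.\ the classical fragmentation carried out smoothly in families.

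\textbf{Chain-level translation.} The group multiplication $\mu:G\times G\to G$ endows $C_*(B\overline{G})$ with an associative Pontryagin product $\phi*\phi' := \mu_*\circ\mathrm{sh}(\phi\otimes\phi')$ via the Eilenberg--Zilber shuffle map. The key observation is that the pointwise product $\phi\bullet\phi'$, obtained by composing the diagonal of $\Delta^n$ with $\phi\times\phi'$ and then with $\mu$, is chain-homotopic to a signed sum of shuffle products of faces of $\phi$ and $\phi'$: both sides compute the image of the diagonal cycle under $\mu$, and Eilenberg--Zilber provides the homotopy. Iterating this reduction over $\phi_1,\dots,\phi_N$ shows that $\sigma$ is homologous to an explicit sum of iterated shuffle products of faces of the $\phi_i$, with simplicial degrees summing to $n$.

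\textbf{Dimension bookkeeping and main obstacle.} In each such summand, the factors of simplicial degree zero are based at the identity (by the basepoint convention for $C_*(B\overline{G})$) and act as units for the product, so only positive-degree factors contribute to the support. Since these degrees sum to $n$, there are at most $n$ of them, and each summand is supported in an $n$-fold union of cover elements, placing it in $C_n^{\mathcal{U}^{(n)}}\subseteq C_n^{\mathcal{U}^{(m)}}$ whenever $n\le m$. The same Eilenberg--Zilber homotopy furnishes the chain homotopy between $F$ and the inclusion, whose degree-$(k+1)$ terms are supported in $(k+1)$-fold unions; this already gives the isomorphism for $k\le m-1$. The extremal case $k=m$ requires a more delicate choice of homotopy that introduces no additional cover set when the simplicial degree increases, and this combinatorial refinement is the main obstacle — it is the technical heart of Haller's Theorem 2.2.10, on which I would rely rather than reproduce in full.
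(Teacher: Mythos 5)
The first thing to say is that the paper does not prove this lemma at all: it is imported verbatim from Haller's thesis (Th.\ 2.2.10), so the only ``proof'' in the paper is the citation. Measured against that, your proposal is a reasonable sketch of the fragmentation strategy underlying Haller's argument, but it is not a proof of the statement: you yourself defer the extremal case $k=m$ --- and with it the injectivity half of the isomorphism in top degree, since fragmenting a bounding $(k+1)$-chain only places it in $(k+1)$-fold unions, which exceeds $m$ exactly when $k=m$ --- to ``the technical heart of Haller's Theorem 2.2.10''. As a proof attempt that is circular: you end up relying on the very result to be proved, which is no more (and no less) than what the paper does by citing \cite{Hal}.

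There is also a genuine error in the geometric step as you state it. Splitting the generating field as $X_t(p)=\sum_i\rho_i X_t(p)$ and integrating each summand separately does not reconstruct $\sigma(p)$: time-one maps of flows do not multiply the way the vector fields add. The classical fragmentation instead integrates the partial sums $\sum_{i\le j}\rho_i X_t$ to isotopies $F_{j,t}$ and takes successive quotients $\phi_j=F_{j-1}^{-1}F_j$; but even then the generating field of $\phi_j$ is supported in $F_{j-1,t}^{-1}(\mathrm{supp}\,\rho_j)$, which need not lie in $U_j$, so one must additionally subdivide in time so that each piece is $C^0$-small and choose the cover and partition with room to spare --- and all of this has to be done smoothly in the simplex parameter $p$ and compatibly with faces in order to define a chain operator $F$ at all. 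These, together with the $k=m$ case, are precisely the points where Haller's proof does real work; your Eilenberg--Zilber bookkeeping and the degree count matching the range $k\le m$ are plausible, but the two load-bearing steps are respectively misstated and outsourced, so the proposal does not stand on its own.
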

Furthermore, there is a spectral sequence for computing $H^{\mathcal{U}}_*(B \overline{G})$ involving certain \v{C}ech homology groups. For this we choose an ordering on the index set of the covering $\mathcal{U} = \{U_i\}$ and we let 
\[U_{i_1,...,i_k} = U_{i_1} \cap ... \cap U_{i_k}.\]
\begin{lem}[\cite{Hal}, Cor.\ 2.3.2]\label{SpecSeq_1}
There is a spectral sequence converging to $H^{\mathcal{U}}_*(B \overline{G})$ whose $E^1$-page has terms
\[E_{p,q}^1 = \bigoplus_{i_1 < .. < i_p} H_q(B \overline{G_{U_{i_1,...,i_p}}})\]
and the differential is given by the \v{C}ech boundary map.
\end{lem}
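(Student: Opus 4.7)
My plan is to construct a double complex in the standard Čech-theoretic fashion and use the two associated filtration spectral sequences. Specifically, form the bicomplex
\[K_{p,q} = \bigoplus_{i_1 < \ldots < i_p} C_q(B\overline{G_{U_{i_1,\ldots,i_p}}}),\]
where the vertical differential is the singular boundary on each summand and the horizontal differential is the Čech boundary, defined as the alternating sum of the chain maps $C_q(B\overline{G_{U_{i_1,\ldots,i_p}}}) \to C_q(B\overline{G_{U_{i_1,\ldots,\widehat{i_j},\ldots,i_p}}})$ induced on simplices by the inclusions $G_{U_{i_1,\ldots,i_p}} \hookrightarrow G_{U_{i_1,\ldots,\widehat{i_j},\ldots,i_p}}$. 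These two differentials commute by naturality, giving a genuine bicomplex whose total complex carries two standard spectral sequences.

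Filtering by the $p$-direction, the $E^0$-differential is the vertical singular boundary, so vertical homology produces exactly
\[E^1_{p,q} = \bigoplus_{i_1 < \ldots < i_p} H_q(B\overline{G_{U_{i_1,\ldots,i_p}}})\]
with $d^1$ induced by the horizontal Čech boundary, matching the statement of the lemma. To identify the abutment, I would compute $H_*(\mathrm{Tot}(K))$ using the other (horizontal) filtration. For each fixed $q$, I would show that the row $K_{*,q}$ is a resolution of $C^{\mathcal{U}}_q(B\overline{G})$ by the following splitting argument: $K_{*,q}$ decomposes as a direct sum, indexed over all smooth singular simplices $\sigma : \Delta_q \to G$ with $\sigma(e_0) = \mathrm{id}$, with the component for $\sigma$ being the ordered simplicial chain complex of the full simplex on the vertex set $I(\sigma) = \{i : \mathrm{supp}(\sigma) \subset U_i\}$. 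For $\sigma \in C^{\mathcal{U}}_q(B\overline{G})$ this set is non-empty, so the component is acyclic with augmentation $\mathbb{Z}\langle\sigma\rangle$; for $\sigma \notin C^{\mathcal{U}}_q(B\overline{G})$ it vanishes. Summing over $\sigma$, the horizontal homology of $K_{*,q}$ is $C^{\mathcal{U}}_q(B\overline{G})$ concentrated in a single horizontal degree, and the row spectral sequence degenerates at $E^2$ to $H^{\mathcal{U}}_*(B\overline{G})$, giving the desired abutment.

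The chief technical point is the simplex-indexed splitting and the acyclicity of each component. This rests on two facts about the definition of $C_*(B\overline{G_U})$: that the support $\mathrm{supp}(\sigma)$ of a smooth singular simplex is a well-defined subset of $M$ (as the closure of the union of supports of the diffeomorphisms in its image), and that the natural map $C_q(B\overline{G_U}) \to C_q(B\overline{G_V})$ for $U \subset V$ is injective onto the subcomplex of simplices with support in $U$. Granted these, each $\sigma$-component of the Čech row becomes the simplicial chain complex of a non-empty simplex, whose contractibility is standard. The remaining work is purely formal bookkeeping with signs and indexing conventions (in particular accommodating any unit degree shift between the positions of $p$ in the Čech complex and in the statement).
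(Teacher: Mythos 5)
Your argument is correct and is essentially the standard one: the paper itself offers no proof of this lemma, citing it from Haller's thesis (Cor.\ 2.3.2), and the proof there is exactly your \v{C}ech--singular double complex, with the rows shown to be resolutions of $C^{\mathcal{U}}_q(B\overline{G})$ via the support-indexed splitting you describe. The only point worth flagging is that the boundary in $C_*(B\overline{G_U})$ involves a left-translation correction on the face opposite the base vertex, but since supports are preserved under this operation and the inclusions $G_{U} \hookrightarrow G_{V}$ are group homomorphisms, your bicomplex structure and the splitting go through unchanged.
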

The second page of this spectral sequence can be computed in the range of $0 \leq q \leq 2k - 1$, where $k$ is the least value for which $H_k(B \overline{Diff_{c}(\mathbb{R}^n)})$ is non-trivial and $n$ is the dimension of $M$. This fact is based on the following lemma, which is a slight extension of (\cite{Hal}, Lemma 1.4.9), and follows immediately from the K\"{u}nneth formula.
\begin{lem}\label{disjoint}
Let $M = M_1 \sqcup M_2$ be a disjoint union of $n$-manifolds and set $G_i = Diff_{c,0}(M_i)$. Assume that $H_n(B\overline{G_i})$ is trivial for all $n < k$. Then $H_0(B\overline{G}) = \mathbb{Z}$ and for all $1 \leq n \leq 2k - 1$
\[ H_n(B\overline{G}) \cong H_n(B\overline{G_1}) \oplus H_n(B\overline{G_2}).\]
\end{lem}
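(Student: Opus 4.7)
The plan is to reduce the statement to a direct application of the Künneth theorem. The first step is the observation that $Diff_{c,0}(M_1 \sqcup M_2)$ splits as a product $G_1 \times G_2$: any element of the identity component is connected to the identity through compactly supported diffeomorphisms, and such an isotopy cannot permute the two disconnected pieces of $M$, so the diffeomorphism restricts componentwise to a pair of compactly supported diffeomorphisms each lying in the identity component of $Diff_c(M_i)$. Conversely any such pair reassembles. Consequently the map $G^{\delta} \to G$ is the product of the maps $G_i^{\delta} \to G_i$, and since homotopy fibres commute with finite products we obtain $\overline{G} \simeq \overline{G_1} \times \overline{G_2}$, and therefore $B\overline{G} \simeq B\overline{G_1} \times B\overline{G_2}$.

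Once this product decomposition is in hand, I would simply invoke the integral Künneth formula
\[
H_n(B\overline{G}) \;\cong\; \bigoplus_{p+q=n} H_p(B\overline{G_1}) \otimes H_q(B\overline{G_2}) \;\oplus\; \bigoplus_{p+q=n-1} \mathrm{Tor}\bigl(H_p(B\overline{G_1}),H_q(B\overline{G_2})\bigr)
\]
and perform a degree count. Since $B\overline{G_i}$ is path connected one has $H_0(B\overline{G_i}) = \mathbb{Z}$, so the $p=0$ and $q=0$ terms in the tensor summand contribute exactly $H_n(B\overline{G_1}) \oplus H_n(B\overline{G_2})$ for $n \ge 1$. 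Any other tensor contribution has both $p,q \ge 1$, hence by the hypothesis both $p,q \ge k$, forcing $p+q \ge 2k$ and so excluded from the range $1 \le n \le 2k-1$. The Tor summands with either factor equal to $\mathbb{Z}$ vanish by flatness, while the remaining Tor terms have both indices $\ge k$, forcing $n - 1 \ge 2k$, again outside the range. The case $n=0$ is $H_0(B\overline{G_1}) \otimes H_0(B\overline{G_2}) = \mathbb{Z}$.

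There is no real obstacle in this argument: once $B\overline{G}$ has been identified as a product, the result is forced by a bare connectivity count in Künneth. The only point that deserves a line of justification is the first step, namely that the restriction to the identity component is precisely what prevents diffeomorphisms from swapping two components of $M$ that happen to be diffeomorphic, so that the splitting $G = G_1 \times G_2$ holds on the nose and the fibration defining $\overline{G}$ becomes a product fibration.
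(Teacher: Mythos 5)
Your proposal is correct and follows essentially the same route as the paper: the paper states that the lemma is a slight extension of Haller's Lemma 1.4.9 and ``follows immediately from the K\"unneth formula,'' which is exactly what you carry out, via the splitting $Diff_{c,0}(M_1 \sqcup M_2) \cong G_1 \times G_2$ (hence $B\overline{G} \simeq B\overline{G_1} \times B\overline{G_2}$) and the connectivity hypothesis eliminating all cross tensor and Tor terms in degrees $1 \leq n \leq 2k-1$. Your degree count and the remark that the identity component cannot permute components are precisely the details the paper leaves implicit.
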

As a consequence of Lemma \ref{disjoint} we obtain a description of the $E^2$-terms of the spectral sequence in Lemma \ref{SpecSeq_1} that is associated to an \emph{admissible} covering. A covering is admissible if the finite intersection of any of its members is diffeomorphic to a disjoint union of balls. For any $U$ that is diffeomorphic to a ball, we let
\[A_q^n = H_q(B \overline{G_U}) = H_q(B \overline{Diff_{c}(\mathbb{R}^n)}).\]
With this notation we have the following stronger version of Lemma \ref{SpecSeq_1}, which is a mild strengthening of Theorem 2.3.4 in \cite{Hal} and whose proof is identical.
\begin{lem}\label{SpecSeq_2}
Let $\mathcal{U}$ be an admissible covering of $M$ and let $\mathcal{U}^{(m)}$ denote the cover consisting of all $m$-fold unions of sets in $\mathcal{U}$. Then there is a spectral sequence converging to $H^{\mathcal{U}^{(m)}}_*(B \overline{G})$ whose $E^2$-page has terms
\[E_{p,q}^2 = H_p(M, A_q^n)\]
for $1 \leq q \leq 2k -1 \leq m$ with $k$ as in Lemma \ref{disjoint}. Moreover, $E_{0,0}^2 = \mathbb{Z}$ and $E_{p,0}^2 = 0$ for $p > 0$.
\end{lem}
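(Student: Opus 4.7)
The plan is to follow Haller's argument for Theorem~2.3.4 of \cite{Hal} essentially verbatim, with a small amount of additional bookkeeping dictated by the target being $\mathcal{U}^{(m)}$ rather than $\mathcal{U}$. The main vehicle is the \v{C}ech-type spectral sequence of Lemma~\ref{SpecSeq_1}. Applied to the admissible cover $\mathcal{U}$, it has
\[E^1_{p,q}=\bigoplus_{i_1<\cdots<i_p}H_q(B\overline{G_{U_{i_1,\ldots,i_p}}})\]
and converges to $H^{\mathcal{U}}_*(B\overline{G})$; admissibility will then let us translate the $E^1$ page into constant-coefficient \v{C}ech homology of $M$.

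The first step is to simplify each $E^1$ term. For each $p$-fold intersection $U_{i_1,\ldots,i_p}$, admissibility guarantees that it is a disjoint union of open balls, so iterating Lemma~\ref{disjoint} in the range $1\le q\le 2k-1$ yields
\[H_q(B\overline{G_{U_{i_1,\ldots,i_p}}})\cong\bigoplus_\alpha A^n_q,\]
the sum being over the connected components of the intersection. Passing to the refinement of $\mathcal{U}$ whose elements are the individual components produces a genuine good cover of $M$, and the \v{C}ech differential identifies the row $E^1_{*,q}$ with the \v{C}ech complex of that cover with constant coefficients $A^n_q$. Its homology is $H_p(M;A^n_q)$, which is the claimed $E^2$ term.

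The $q=0$ row needs separate treatment because the normalization requiring $\sigma\colon\Delta_n\to G$ to send $e_0$ to the identity forces $C_0(B\overline{G_V})=\mathbb{Z}$, generated by the single constant simplex at the identity, independently of the support $V$. A direct count of the \v{C}ech boundary then collapses the row and yields $E^2_{0,0}=\mathbb{Z}$ and $E^2_{p,0}=0$ for $p>0$.

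The remaining point, and the place where I expect the main technical work, is to arrange that the spectral sequence converges to $H^{\mathcal{U}^{(m)}}_*(B\overline{G})$ rather than $H^{\mathcal{U}}_*(B\overline{G})$. Here the hypothesis $2k-1\le m$ enters exactly as in \cite{Hal}: it guarantees that the support-shrinking/fragmentation arguments used to construct the filtration on the chain complex remain valid after passing from simplices supported in a single element of $\mathcal{U}$ to simplices supported in an $m$-fold union, so that the inclusion $C^{\mathcal{U}}_*\hookrightarrow C^{\mathcal{U}^{(m)}}_*$ compares the two spectral sequences in the relevant range. This compatibility at the chain level is the only piece that differs from Haller's presentation; the remaining identifications with $H_p(M;A^n_q)$ are essentially formal consequences of the admissibility of $\mathcal{U}$ and of Lemma~\ref{disjoint}.
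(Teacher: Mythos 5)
Your overall route is the one the paper intends: the paper offers no argument beyond asserting that the proof of Haller's Theorem 2.3.4 carries over, and your skeleton (the spectral sequence of Lemma \ref{SpecSeq_1} for the admissible cover, Lemma \ref{disjoint} to identify the rows, a nerve-type identification with $H_p(M;A_q^n)$) is the right one. But two of your steps are not actually established as written. For the $q=0$ row, the $E^1$ entries are $H_0(B\overline{G_{U_{i_1,\ldots,i_p}}})\cong\mathbb{Z}$ (not $C_0$), and if only multi-indices with non-empty intersection are allowed this row is the simplicial chain complex of the nerve of $\mathcal{U}$, whose positive-degree homology need not vanish (an admissible cover of $S^2$ whose nerve is $\partial\Delta^3$ gives a nonzero class); your ``direct count'' does not address this. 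The collapse holds because multi-indices with empty intersection also contribute: $G_{\emptyset}$ is the trivial group, $B\overline{G_{\emptyset}}$ is a point, so the row is the chain complex of the full simplex on the index set, which is acyclic in positive degrees. Relatedly, the identification of the rows $1\le q\le 2k-1$ via ``refining to components'' is not literally correct (a component of $U_{i_1,\ldots,i_p}$ need not be an intersection of components, so one does not simply get the \v{C}ech complex of a refined good cover); the clean argument is the Mayer--Vietoris spectral sequence of $\mathcal{U}$ with constant coefficients $A^n_q$, which degenerates because all intersections are disjoint unions of balls, and one needs the colimit (direct sum) form of Lemma \ref{disjoint} when intersections have infinitely many components. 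The conclusions you state are correct, but these are the justifications that make them so.

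More seriously, the one claim in which $m$ actually appears --- that the spectral sequence abuts to $H^{\mathcal{U}^{(m)}}_*(B\overline{G})$ --- is where the content of Haller's theorem lies, and your treatment of it is not an argument. The double complex you build computes $H^{\mathcal{U}}_*(B\overline{G})$; to obtain the lemma you need the inclusion $C^{\mathcal{U}}_*\hookrightarrow C^{\mathcal{U}^{(m)}}_*$ to induce an isomorphism in the relevant range of total degrees, i.e.\ a chain-level fragmentation statement (chains of the relevant degrees supported in $m$-fold unions can be fragmented, compatibly with boundaries, into chains supported in single members of $\mathcal{U}$). This does not follow from ``comparing the two spectral sequences'' along the inclusion, nor from the fragmentation lemma as quoted in the paper, which applied to $\mathcal{U}=\mathcal{U}^{(1)}$ only controls degrees $\le 1$; it is precisely the technical machinery of Chapter 2 of \cite{Hal} and the reason for the hypothesis $2k-1\le m$. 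If you intend to defer this point to Haller --- as the paper itself does --- you should cite that specific fragmentation result explicitly rather than present the comparison of subcomplexes as if it were formal.
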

The entire diffeomorphism group $Diff_c(M)$ acts on its identity component via conjugation and this action induces a module structure on $H_*(B\overline{G})$. Since the identity component of $Diff_c(M)$ acts trivially, this then descends to a module structure for the mapping class group $\Gamma_M$ of $M$. This module structure is then compatible with the spectral sequence given in Lemma \ref{SpecSeq_2}, since any $\phi \in Diff_c(M)$ induces a natural map between the spectral sequences associated to $\mathcal{U}^{(m)}$ and $\phi(\mathcal{U}^{(m)})$ respectively. In particular, since $\Gamma_M$ acts trivially on $A_q^n$, the action on terms of the form $H_p(M, A_q^n)$ is given via the ordinary action of the mapping class group on $H_p(M)$.


\section{Proofs of main results}
We now come to the proof of Theorem \ref{hom_surj}.
\begin{proof}[Proof of Theorem \ref{hom_surj}]
By the classical result of Thurston (see \cite{Th2})
\[A_1^n = H_1(B \overline{Diff_{c,0}(M)}) = H_1(B \overline{Diff_{c}(\mathbb{R}^n)}) =0.\]
Specialising to the case of $M = S^2$ and considering the spectral sequence of Lemma \ref{SpecSeq_2} we see that the $E^2$-page in degree 3 homology is
\[E_{0,3}^2 \oplus E_{1,2}^2 = H_0(S^2, A_3^2) \oplus H_1(S^2, A_2^2) = A_3^2.\]
Thus the $E^{\infty}$-term in degree 3 is a quotient of this group. If we consider the inclusion of any disc $U = D^2$ into $S^2$, then a comparison of the associated spectral sequences shows that the map
\[H_k(B\overline{Diff_{c}(D^2)}) \to H_k(B\overline{Diff_+(S^2)})\]
is an isomorphism for $0 \leq k \leq 2$ and a surjection for $k=3$. Finally, the topological group $G = Diff_+(S^2)$ is homotopy equivalent to $SO(3)$ by Smale's Theorem (cf.\ \cite{Iva}). Thus the inclusion of the fibre in the fibration
\[ B \overline{G} \to B G^{\delta} \to B G \simeq BSO(3)\]
induces an isomorphism in rational homology for $k \leq 3$, since $H_k(BSO(3), \mathbb{Q})$ is trivial for $k \leq 3$ and this concludes the proof.
\end{proof}
We then obtain an answer to Question \ref{Morita} as a corollary.
\begin{cor}\label{Morita_question}
For all $g \geq 0$ the classes $u_1c_2$ and $u_1c_1^2$ induce a surjective map
\[H_3(Diff^{\delta}_+(\Sigma_g)) \to \mathbb{R}^2.\]
\end{cor}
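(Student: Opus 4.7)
The corollary is essentially a packaging of Theorem~\ref{hom_surj} together with Rasmussen's result, so the plan is to chase the surjectivity through two natural maps.

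First I would record Rasmussen's theorem (\cite{Ras}) in the form we need: the map
\[H_3(Diff^{\delta}_+(S^2)) \to \mathbb{R}^2\]
given by evaluation against $u_1c_2$ and $u_1c_1^2$ is surjective. Because the target is a rational vector space, this map automatically factors through $H_3(Diff^{\delta}_+(S^2),\mathbb{Q})$. Next, by Theorem~\ref{hom_surj} applied to the inclusion of any closed disc $D^2 \subset S^2$, the induced map
\[H_3(Diff^{\delta}_c(D^2),\mathbb{Q}) \to H_3(Diff^{\delta}_+(S^2),\mathbb{Q})\]
is a surjection. Since the classes $u_1c_2$ and $u_1c_1^2$ are natural under the inclusion $Diff_c(D^2) \hookrightarrow Diff_+(S^2)$, the composite
\[H_3(Diff^{\delta}_c(D^2)) \to H_3(Diff^{\delta}_+(S^2)) \to \mathbb{R}^2\]
coincides with the surjection induced by $u_1c_2$ and $u_1c_1^2$ on $H_3(Diff^{\delta}_c(D^2))$. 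In particular, this last map is itself surjective.

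Now for arbitrary $g \geq 0$, I would fix a smooth embedding $D^2 \hookrightarrow \Sigma_g$; extending compactly supported diffeomorphisms of $D^2$ by the identity gives a homomorphism
\[Diff_c(D^2) \hookrightarrow Diff_+(\Sigma_g),\]
hence a map $H_3(Diff^{\delta}_c(D^2)) \to H_3(Diff^{\delta}_+(\Sigma_g))$. Again by the naturality of the secondary classes $u_1c_2$ and $u_1c_1^2$ under this inclusion, the diagram
\[
\xymatrix{
H_3(Diff^{\delta}_c(D^2)) \ar[r] \ar[dr] & H_3(Diff^{\delta}_+(\Sigma_g)) \ar[d] \\
 & \mathbb{R}^2
}
\]
commutes, with diagonal arrow surjective by the previous paragraph. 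Therefore the right-hand vertical map is surjective as well, which is the statement of the corollary.

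There is no real obstacle here; the content has already been absorbed into Theorem~\ref{hom_surj} and \cite{Ras}. The only point that warrants a line of justification is the naturality of $u_1c_2$ and $u_1c_1^2$ under the inclusions $Diff_c(D^2) \hookrightarrow Diff_+(S^2)$ and $Diff_c(D^2) \hookrightarrow Diff_+(\Sigma_g)$, which is immediate from the fact that these secondary classes are constructed from the universal flat $Diff$-structure and pull back along the induced maps of classifying spaces.
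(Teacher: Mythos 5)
Your proposal is correct and follows essentially the same route as the paper: Rasmussen's surjectivity for $S^2$, Theorem \ref{hom_surj} to pull the surjection back to $H_3(Diff^{\delta}_c(D^2))$ (noting, as you do, that the rational coefficients cause no loss since $\mathbb{R}^2$ is a $\mathbb{Q}$-vector space), and then naturality of $u_1c_2$, $u_1c_1^2$ under the extension-by-identity inclusion $Diff_c(D^2)\hookrightarrow Diff_+(\Sigma_g)$. No gaps; this matches the paper's argument.
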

\begin{proof}
We consider the following commutative diagram, where the maps to $\mathbb{R}^2$ are those induced by the classes $u_1c_2$ and $u_1c_1^2$
\[\xymatrix{ H_3(Diff^{\delta}_+(S^2))  \ar[r]& \mathbb{R}^2. \\
 H_3(Diff^{\delta}_c(D^2)) \ar[u] \ar[ur] &}\]
Since the map on left is surjective modulo torsion by Theorem \ref{hom_surj} and the top-most map is surjective by \cite{Ras}, we conclude that the map on $ H_3(Diff^{\delta}_c(D^2))$ is also surjective. Then by considering the analogous diagram for $Diff^{\delta}_+(\Sigma_g)$, the result follows.
\end{proof}

We next turn to the proof of low-dimensional Harer stability for the groups $Diff_{c}(\Sigma^1_g) $.
\begin{thm}\label{Harer_stability}
The natural map $\Sigma^1_g \hookrightarrow \Sigma^1_{g+1}$ induces an isomorphism
\[H_k(Diff^{\delta}_c(\Sigma_g^1), \mathbb{Q}) \to H_k(Diff^{\delta}_c(\Sigma_{g+1}^1), \mathbb{Q})\]
for $k \leq 3$ and  $g \geq 8$. Furthermore, the rank of the image of $H_4(Diff^{\delta}_+(\Sigma_g^1), \mathbb{Q})$ in $H_4(\Gamma_g^1, \mathbb{Q})$ is independent of $g$ for $8 \leq g$.
\end{thm}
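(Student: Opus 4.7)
The plan is to combine the Hochschild-Serre spectral sequence for the extension
\[1 \to Diff_{c,0}(\Sigma^1_g) \to Diff_c(\Sigma^1_g) \to \Gamma^1_g \to 1\]
with the computation of the kernel's rational homology via Lemma \ref{SpecSeq_2}. Since $Diff_{c,0}(\Sigma^1_g)$ is contractible by Earle-Eells, the rational homology of $Diff^{\delta}_{c,0}(\Sigma^1_g)$ coincides with $H_*(B\overline{Diff_{c,0}(\Sigma^1_g)}, \mathbb{Q})$. Applying Lemma \ref{SpecSeq_2} to $M = \Sigma^1_g$ together with Thurston's vanishing $A_1^2 = 0$, and using that $\Sigma^1_g$ has rational homology only in degrees $0$ and $1$, the $E^2$-page reduces to two non-zero columns and in low degrees yields
\[H_1(B\overline{Diff_{c,0}(\Sigma^1_g)}, \mathbb{Q}) = 0, \quad H_2 \cong A_2^2, \quad H_3 \cong A_3^2 \oplus \bigl(A_2^2 \otimes H_1(\Sigma^1_g, \mathbb{Q})\bigr),\]
with $\Gamma^1_g$ acting trivially on each $A_q^2$ and via the standard symplectic representation on $H_1(\Sigma^1_g, \mathbb{Q})$, as guaranteed by the module-compatibility remark at the end of Section \ref{BG}.

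For the first assertion I would then examine the Hochschild-Serre spectral sequence $E^2_{p,q} = H_p(\Gamma^1_g, H_q(Diff^{\delta}_{c,0}(\Sigma^1_g), \mathbb{Q}))$ in total degree $p+q \leq 3$. Every such entry is of one of three types: it vanishes (because the relevant fibre homology is zero or because $H_1(\Gamma^1_g, \mathbb{Q}) = 0$); it is an untwisted $H_p(\Gamma^1_g, \mathbb{Q})$, to which classical Harer stability applies for $g \geq 8$; or it is an invariant $H_0(\Gamma^1_g, -)$ in which the $H_1(\Sigma^1_g, \mathbb{Q})$-summand of $H_3$ is killed by the fact that the symplectic representation has no non-zero invariants. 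Notably, no twisted form of Harer stability is required in this range. Comparing the two spectral sequences along the natural inclusion then gives an isomorphism on every $E^2$-term in total degree $\leq 3$, hence on the abutment.

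For the $H_4$ statement I would recognise the map $H_4(Diff^{\delta}_c(\Sigma^1_g), \mathbb{Q}) \to H_4(\Gamma^1_g, \mathbb{Q})$ as the Hochschild-Serre edge homomorphism, so its image is $E^{\infty}_{4,0}$. The outgoing differentials $d_2$ and $d_3$ from $(4,0)$ land in $H_2(\Gamma^1_g, 0) = 0$ and in a subquotient of $H_1(\Gamma^1_g, A_2^2) = 0$ respectively, so only $d_4 : H_4(\Gamma^1_g, \mathbb{Q}) \to E^4_{0,3}$ can be non-trivial. The target $E^4_{0,3}$ is a subquotient of $E^2_{0,3} = A_3^2$ obtained by quotienting by the image of the natural $d_2$ from $E^2_{2,2} = A_2^2$, and this subquotient stabilises as soon as $H_2(\Gamma^1_g, \mathbb{Q})$ does. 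Classical Harer stability at $g \geq 8$ applied to the source, together with naturality under the genus-stabilisation, then forces the rank of $E^{\infty}_{4,0}$ to be independent of $g$.

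The main obstacle I expect is the careful identification of the $\Gamma^1_g$-module structure on the $E^2$-page of Lemma \ref{SpecSeq_2}'s spectral sequence and of its naturality under the inclusion $\Sigma^1_g \hookrightarrow \Sigma^1_{g+1}$; the module-compatibility statement recorded at the end of Section \ref{BG} is precisely the input that turns the comparison into a morphism of spectral sequences induced by the geometric inclusion. Beyond that bookkeeping, everything is forced by classical Harer stability and by the vanishing of symplectic invariants in the standard representation.
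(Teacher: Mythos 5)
Your proposal is correct and follows essentially the same route as the paper: contractibility of $Diff_{c,0}(\Sigma^1_g)$, the computation of $H_*(B\overline{Diff_{c,0}(\Sigma^1_g)},\mathbb{Q})$ via Lemma \ref{SpecSeq_2}, the Hochschild--Serre comparison using untwisted Harer stability, vanishing of symplectic (co)invariants, and the analysis of $d_2\colon E^2_{2,2}\to E^2_{0,3}$ and $d_4\colon E^4_{4,0}\to E^4_{0,3}$. The only caveat is that your phrase ``isomorphism on every $E^2$-term in total degree $\leq 3$, hence on the abutment'' is too quick on its own, since $E^{\infty}_{0,3}$ also depends on those two incoming differentials from total degree $4$; the differential analysis you carry out in the $H_4$ paragraph (together with Harer stability on $H_4(\Gamma^1_g,\mathbb{Q})$, which is where $g\geq 8$ really enters) is exactly what closes this, and the paper wires it in explicitly.
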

\begin{proof}
We first note that by \cite{ES} the topological group $Diff_{c,0}(\Sigma^1_g)$ is contractible for any $g$. Thus
\[B\overline{Diff_{c,0}(\Sigma^1_g)} \simeq B Diff^{\delta}_{c,0}(\Sigma^1_g),\]
which in turn yields a natural isomorphism 
\[H_* (B\overline{Diff_{c,0}(\Sigma^1_g)} ) \cong H_*(Diff^{\delta}_{c,0}(\Sigma^1_g)).\]
We consider the Hochschild-Serre spectral sequence associated to the group extension
\[1 \to Diff_{c,0}(\Sigma^1_g) \to Diff_{c}(\Sigma^1_g) \to \Gamma^1_g \to 1,\]
whose $E^2$-page is
\[E^2_{p,q} = H_p(\Gamma^1_g, H_q(Diff^{\delta}_{c,0}(\Sigma^1_g), \mathbb{Q})).\]
When referring explicitly to the $k$-th page of the spectral sequence associated to the extension $Diff_{c}(\Sigma^1_g)$ we shall write $E^k_{p,q}(\Sigma^1_g)$.

Since $H_* (B\overline{Diff_{c,0}(\Sigma^1_g)} ) \cong H_*(Diff^{\delta}_{c,0}(\Sigma^1_g))$, Theorem \ref{hom_surj} and the naturality of the Hochschild-Serre spectral sequence imply that the following is an isomorphism for $q \leq 2$:
\[E^2_{0,q}(\Sigma_g^1) \to E^2_{0,q}(\Sigma_{g+1}^1).\]
Moreover, Harer stability for $\Gamma^1_g$ yields isomorphisms on $E^2_{p,0}$ terms for any $p \leq 4$ and the groups $E^2_{p,1}$ are all trivial, since $Diff_{c,0}(\Sigma^1_g)$ is perfect. We conclude that the maps
\[E^2_{p,q}(\Sigma_g^1) \to E^2_{p,q}(\Sigma_{g+1}^1)\]
are isomorphisms for all bi-degrees $(p,q)$ such that $p + q \leq 2$ or such that $p \leq 4$ and $q = 0$.
Similarly we have isomorphisms on $E^2_{2,2}$-terms for $ g \geq 5$. For Lemma \ref{SpecSeq_2} implies 
\[E^2_{2,2}(\Sigma_g^1) \cong H_2(\Gamma^1_g, A^2_2) \cong E^2_{2,2}(\Sigma_{g+1}^1),\]
where the second isomorphism follows from  Harer stability since $A^2_2$ is a trivial $\Gamma^1_g$-module. We finally claim that
\[E^2_{0,3}(\Sigma_g^1) \to E^2_{0,3}(\Sigma_{g+1}^1)\]
is also an isomorphism and this implies the result in degree $3$ homology. To this end, we first note that by Lemma \ref{SpecSeq_2}
\[ H_3(Diff^{\delta}_{c,0}(\Sigma^1_g),\mathbb{Q}) \cong H_3(B\overline{Diff_{c,0}(\Sigma^1_g)},\mathbb{Q} ) \cong (H_0(\Sigma^1_g) \otimes A^2_3 \otimes \mathbb{Q}) \oplus (H_1(\Sigma^1_g) \otimes A^2_2\otimes \mathbb{Q}),\]
whence we conclude that
\[H_0(\Gamma_g^1, H_3(Diff^{\delta}_{c,0}(\Sigma^1_g),\mathbb{Q})) = H_3(Diff^{\delta}_{c,0}(\Sigma^1_g),\mathbb{Q})_{\Gamma_g^1}\cong A^2_3 \otimes \mathbb{Q}.\]
Moreover, since $\Gamma_g^1$ is perfect for $g \geq 2$
\[H_1(\Gamma_g^1, H_2(Diff^{\delta}_{c,0}(\Sigma^1_g),\mathbb{Q})) = H_1(\Gamma_g^1,A_2^2\otimes \mathbb{Q}) = 0\]
proving the claim and with it the first part of the theorem. 

In fact, since $H_3(\Gamma_g^1, \mathbb{Q})$ is trivial for $g \geq 6$ (see \cite{Iva}), we deduce that the only non-trivial $E^2$-term in degree $3$ is $E^2_{0,3}$. The isomorphism on $E^2_{2,2}$ then yields an isomorphism on $E^3_{0,3}$ and then the fact that $E^2_{3,1} = E^3_{3,1} = 0$ implies the same for $E^4_{0,3}$. We then consider the following commutative diagram:
\[\xymatrix{E^4_{0,3}(\Sigma_g^1) \ar[r]^{\cong} & E^4_{0,3}(\Sigma_{g+1}^1)\\
H_4(\Gamma_g^1) = E^4_{4,0}(\Sigma_g^1) \ar[u]^{\partial^{4}_{4,0}} \ar[r] & \ar[u]^{\partial^{4}_{4,0}} H_4(\Gamma_{g + 1}^1) = E^4_{4,0}(\Sigma_{g+1}^1),}\]
where the bottom arrow is an isomorphism for $g \geq 8$ by Harer Stability and $\partial^{4}_{4,0}$ denotes the bi-graded differential on the $E^4$-page of the spectral sequence. The image of $H_4(Diff^{\delta}_+(\Sigma_g^1), \mathbb{Q})$ in $H_4(\Gamma_g^1, \mathbb{Q})$ is $Ker(\partial^{4}_{4,0})$, whose rank is then independent of $g \geq 8$.
\end{proof}
This result has implications for a problem posed by Kotschick and Morita. In particular, they asked whether the image of the second MMM-class in $H^*(\Gamma^1_g)$ is non-trivial (\cite{KM1}, Problem 4). Theorem \ref{Harer_stability} says that the answer to this question is independent of $g$. In fact a closer examination of the proof of Theorem \ref{Harer_stability} shows that the same conclusion holds for closed surfaces and for surfaces with arbitrarily many boundary components.

\section*{Acknowledgements}
The author would like to thank first and foremost Prof.\ D.\ Kotschick for his continued encouragement and support. The financial support of the Deutsche Forschungsgemeinschaft is also gratefully acknowledged.


\begin{thebibliography}{}
\bibitem[Bott]{Bott} R. Bott,
\emph{Lectures on characteristic classes and foliations},
in Lectures on algebraic and differential topology, Springer Lecture Notes in Mathematics, \textbf{279}, 1972.

\bibitem[ES]{ES} C. J. Earle and A. Schatz,
\emph{Teichm\"{u}ller theory for surfaces with boundary},
J. Differential Geom. \textbf{4} (1970), 169--185.

\bibitem[Hal]{Hal} S. Haller,
\emph{Perfectness and simplicity of certain groups of diffeomorphisms},
PhD Thesis, University of Vienna, 1998. Available at: \url{http://www.mat.univie.ac.at/~stefan/files/diss.pdf}. 

\bibitem[Iva]{Iva} N. Ivanov,
\emph{Mapping Class Groups}, 1998. Available at: \url{http://www.mth.msu.edu/ ~ivanov/indexmath.html}.

\bibitem[KM]{KM1} D. Kotschick and S. Morita,
\emph{Signatures of foliated surface bundles and the symplectomorphism groups of surfaces},
Topology \textbf{44} (2005), 131--149.

\bibitem[Math]{Mat} J.Mather,
\emph{On the homology of Haefliger's classifying space},
course given at Varenna, 1976, CIME Differential Topology, Leguore editore, Napoli, 1979, 73--116.

\bibitem[Mor]{Mor1} S. Morita,
\emph{Cohomological structure of the mapping class group and beyond},
Problems on Mapping Class Groups and Related Topics,
Proc. Symp. Pure and Applied Math., Volume 74, 2006, 350--378. 

\bibitem[Ras]{Ras} O. Rasmussen,
\emph{Continuous variation of foliations in codimension two},
Topology \textbf{19} (1980), 335--349.

\bibitem[Th]{Th2} W. Thurston,
\emph{Foliations and groups of diffeomorphisms},
Bull. Amer. Math. Soc. \textbf{80} (1973), 304--307.

\bibitem[Tsu]{Tsu} T. Tsuboi,
\emph{Classifying space for groupoid structures}, 2001. Available at: \url{www.foliations.org/surveys/Rio2001_BGamma.pdf}.
\end{thebibliography}
\end{document}